\newif\iffurther
\numberwithin{equation}{section}
\numberwithin{figure}{section}
\theoremstyle{plain}
\newtheorem{thm}{Theorem}[section] 
\newtheorem*{thm*}{Theorem}
\newtheorem{prop}[thm]{Proposition}
\newtheorem{cor}[thm]{Corollary}
\theoremstyle{definition}
\theoremstyle{remark}
\newtheorem*{acknowledgement*}{Acknowledgement}
\newcommand\suchthat{\;\ifnum\currentgrouptype=16 \middle\fi|\;}
\def\E{\mathcal{E}}
\def\P{\mathcal{P}}
\def\T{\mathcal{T}}
\begin{document}

\title{Modules over Fomin-Kirillov algebras and their subalgebras}

\author{Be'eri Greenfeld}
\address{Department of Mathematics, University of Washington, Seattle, WA, 98195, USA}
\email{grnfld@uw.edu}

\author{Sarah Mathison}
\address{Department of Mathematics, University of Washington, Seattle, WA, 98195, USA}
\email{smathi2@uw.edu}

\author{Aditya Saini}
\address{Department of Mathematics, University of California San Diego, La Jolla, CA, 92093, USA}
\email{asaini@ucsd.edu}

\author{Scott Wynn}
\address{Department of Mathematics and Department of Computer Science, University of Washington, Seattle, WA, 98195, USA}
\email{scotthw@cs.washington.edu}

\begin{abstract}
We compute the truncated point schemes of subalgebras of Fomin-Kirillov algebras associated with certain graphs. While Fomin-Kirillov algebras do not admit any truncated point modules, we prove a tight bound on the degrees of truncated point modules over generalized Fomin-Kirillov algebras associated with trees.
\end{abstract}

\maketitle

\section{Introduction}

Fomin-Kirillov algebras $\E_n,\ n=3,4,5,\dots$, introduced by Fomin and Kirillov in 1999 in their study of the cohomology of flag manifolds \cite{FK}, are noncommutative quadratic algebras which are intimately related to cohomology theory and homological algebra, algebraic combinatorics, Schubert calculus, Nichols algebras, Hopf algebras, and more \cite{Andruskiewitsch2003,GeneralizedFK,Fomin2000,HV2019,Kirillov2010,Lenart,Meszaros2014,Milinski2000,Stefan2016,WZ}. 
Despite the deep and multidisciplinary study of these algebras, several fundamental problems regarding their structure remained open. One of the most intriguing problems is: which Fomin-Kirillov algebra are finite-dimensional? The answer to this problem is known only for $n\leq 5$, whose (finite) dimensions and Hilbert series have been computed \cite{FK, Vendramin}.

Let $k$ be an algebraically closed base field. Let $A=k\oplus A_1\oplus A_2\oplus \cdots$ be a connected graded $k$-algebra generated in degree one. A point module over $A$ is a graded, cyclic $A$-module $M=M_0\oplus M_1\oplus \dots$ generated in degree $0$ with $1$-dimensional homogeneous components; that is, with Hilbert series $H_M(t)=1+t+t^2+\cdots$. Point modules of noncommutative graded rings resemble the role of points on projective algebraic varieties. Indeed, over commutative algebra, point modules are parametrized by the `proj' scheme. An important related notion is of truncated point modules.
A degree-$n$ truncated (or `$n$-truncated') point module is a graded cyclic module with Hilbert series $1+t+\cdots+t^n$. We denote by $\P_n(A)$ the space of $n$-truncated point modules of $A$. The limit $\varprojlim_{n} \mathcal{P}_n(A)$ is the space of point modules. 
Point modules and truncated point modules have thus become fundamental in noncommutative projective algebraic geometry, enabling one to attach a geometric object to a given noncommutative graded algebra, see \cite{ATV2, ATV, AZ, AZ2, RRZ, RZ, Smith} and references therein. Ring-theoretic properties can be studied using canonical maps to twisted rings corresponding to the moduli spaces of point modules, see \cite{RRZ, RZ}. Notably, in \cite{SW}, Sierra and Walton solved a long-standing open problem on the (non-)Noetherianity of certain enveloping algebras using morphisms into twisted coordinate rings of their spaces of point modules. It is also worth mentioning that even some very well behaved graded algebras can lack point modules, see \cite{Vancliff2024}.

In this paper, motivated by the aforementioned results and applications, we study the spaces of (truncated) point modules over Fomin-Kirillov algebras and their graded subalgebras. Generalized Fomin-Kirillov algebras, defined and studied in \cite{GeneralizedFK}, are subalgebras $\E_G\subseteq \E_n$ generated by the edges of subgraphs $G\subseteq K_n$. Understanding the space of (truncated) point modules of such algebras can be extremely useful. Most elementary, if $\P_d(B)\neq \emptyset$ for some graded subalgebra $B\subseteq A$, it follows that $A_d\neq 0$, and if $B$ admits a point module then both $A,B$ are infinite-dimensional. It was proposed to study point modules of Nichols algebras in \cite{An2004}, and this was done in \cite{ABFF2023} for an interesting class of Nichols algebras. The intimate connection between Fomin-Kirillov algebras and Nichols algebras serves as another motivation for our work. 

The main theme of this paper is that truncated point modules of generalized Fomin-Kirillov algebras are as restricted as possible. First:

\begin{thm} \label{thm:fk}
Let $n\geq 3$ be an integer. Then $\mathcal{E}_n$ admits no truncated point modules of degree greater than $1$.
\end{thm}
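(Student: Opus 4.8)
The plan is to reduce Theorem~\ref{thm:fk} to the single assertion that $\E_n$ has no degree-$2$ truncated point module, and then to derive a contradiction from the defining relations. For the reduction: if $M = M_0 \oplus \cdots \oplus M_d$ is a $d$-truncated point module with $d \geq 2$, then $M_{\geq 3}$ is a submodule and $M/M_{\geq 3}$ is a $2$-truncated point module, so it suffices to prove $\P_2(\E_n) = \emptyset$. I would then invoke the standard description of $2$-truncated point modules of a quadratic algebra $A = T(V)/(R)$: fixing a basis vector $e_i$ of each $M_i$ in $M = M_0 \oplus M_1 \oplus M_2$, left multiplication $v \mapsto v e_i$ determines functionals $\phi_0, \phi_1 \in V^*$ via $v e_i = \phi_i(v) e_{i+1}$; cyclicity forces $\phi_0, \phi_1 \neq 0$, and the requirement that every quadratic relation kill $e_0$ is exactly the condition that $\phi_1 \otimes \phi_0$, acting on $V\otimes V$ by $(v\otimes w)\mapsto \phi_1(v)\phi_0(w)$, annihilate $R$. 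So the task becomes: there is no pair of nonzero functionals $\phi_0,\phi_1$ on $V=\Span\{[ij]: 1\le i<j\le n\}$ with $\phi_1\otimes\phi_0$ vanishing on the space $R$ of Fomin-Kirillov relations.

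The engine is a support analysis. Write $x_{[ij]}=\phi_1([ij])$ and $y_{[ij]}=\phi_0([ij])$, extended skew-symmetrically in the index pair, and for an edge $e$ of $K_n$ let $x_e,y_e$ be the corresponding values (up to sign). Set $S=\{e: x_e\neq 0\}$ and $T=\{e: y_e\neq 0\}$; both are nonempty because $\phi_0,\phi_1\neq 0$. Evaluating $\phi_1\otimes\phi_0$ on the relation $[ij]^2$ gives $x_e y_e=0$ for every edge $e$, so $S\cap T=\emptyset$. Evaluating it on the commutation relation for a disjoint pair $e,f$ gives $x_e y_f=\pm x_f y_e$; were some $e\in S$ disjoint from some $f\in T$, the left side would be nonzero, forcing $x_f\neq 0$ and hence $f\in S\cap T$, which is impossible. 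Therefore every edge of $S$ meets every edge of $T$.

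Now choose any $e\in S$ and $f\in T$. They are distinct and share exactly one vertex, say $e=\{v,a\}$ and $f=\{v,b\}$, so they bound the triangle on $\{v,a,b\}$. From $e\in S\setminus T$ we get $y_{[va]}=0$, and from $f\in T\setminus S$ we get $x_{[vb]}=0$. Evaluating $\phi_1\otimes\phi_0$ on the Fomin-Kirillov triangle relation for the ordered triple $(v,b,a)$, namely $[vb][ba]+[ba][av]+[av][vb]=0$, the first term carries the factor $x_{[vb]}=0$ and the second carries $y_{[av]}=-y_{[va]}=0$, so only the third survives: $x_{[av]}\,y_{[vb]}=-x_{[va]}\,y_{[vb]}=0$. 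But $x_{[va]}\neq 0$ since $e\in S$ and $y_{[vb]}\neq 0$ since $f\in T$ --- a contradiction. Hence $\P_2(\E_n)=\emptyset$, and the theorem follows.

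The proof is short, and I expect the only genuine care to be needed in the final step: one must pick, among the two triangle relations supported on $\{v,a,b\}$, the one whose monomial not involving the third edge $[ab]$ is $[va][vb]$ rather than $[vb][va]$ --- that is precisely the choice that makes the surviving term a product of two \emph{nonzero} scalars --- and one must chase the identity $[ij]=-[ji]$ through whichever sign normalization of the disjoint-edge and triangle relations the paper uses. I would therefore fix that normalization, and the explicit point-scheme equations it imposes on $(\phi_0,\phi_1)$, at the very beginning; with those in hand the rest is routine.
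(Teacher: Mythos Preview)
Your argument is correct and takes a genuinely different route from the paper's. The paper first establishes the cases $n=3$ and $n=4$ by explicit matrix rank computations (Propositions~\ref{prop:E3} and~\ref{prop:E4}), and then for general $n$ observes that any two generators $x_{ij},x_{kl}$ lie in a graded subalgebra isomorphic to $\E_3$ or $\E_4$; restricting a hypothetical $2$-truncated point module to that subalgebra and invoking the small cases forces $x_{ij}x_{kl}\cdot e_0=0$ for every pair, hence $(\E_n)_2\cdot e_0=0$. Your proof instead handles all $n\geq 3$ uniformly via a support analysis on the multilinearized relations: $R^1$ gives $S\cap T=\emptyset$, $R^2$ forces every edge of $S$ to meet every edge of $T$, and then a single application of the cyclic triangle relation on any $e\in S$, $f\in T$ (which, with the convention $[ij]=-[ji]$, is one of $R^3_{ijk},R^4_{ijk}$ for any ordering of the three vertices) produces the contradiction. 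Your approach is shorter and avoids the base-case linear algebra entirely; the paper's approach, by contrast, makes visible the subalgebra embeddings $\E_m\hookrightarrow \E_n$ and the reduction-to-small-cases paradigm. Your closing remark about choosing the triangle relation whose ``pure'' monomial is $[va][vb]$ rather than $[vb][va]$ is exactly the right point of care, and your chosen ordering $(v,b,a)$ does the job.
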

For a graded algebra $A$ one can measure the optimal degree of truncated point modules of $A$ by $p(A):=\sup\{i\in \mathbb{N}\ |\ \P_i(A) \neq \emptyset\}$. 
Thus, $p(\E_n)=1$, the smallest possible value\footnote{Notice that $\P_1(A)\cong \mathbb{P}(A_1)$.}.  

We next compute $\P_*(\E_G)$ for some graphs $G \subseteq K_n$; the main difficulty is that generalized Fomin-Kirillov algebras are usually not quadratic, and no general useful presentation of them is known. We prove the following:

\begin{thm} \label{thm:trees}
For every tree $\T$ we have that $p(\E_\T)$ is at most the number of edges of $\T$, 
and this bound is best possible.
\end{thm}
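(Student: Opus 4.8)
\medskip
\noindent\emph{Proof proposal.}
The plan is to encode an $n$-truncated point module $M=M_0\oplus\cdots\oplus M_n$ over $\E_\T$ by its sequence of ``transition functionals''. Choosing a basis vector $e_j$ of each $M_j$ and writing $x_e$ for the generator of $\E_\T$ attached to an edge $e$ of $\T$, being a cyclic module with one-dimensional graded pieces amounts to a sequence $\ell_1,\dots,\ell_n$ of \emph{nonzero} linear functionals on the span $V$ of the edge generators, with $x_e\cdot e_{j-1}=\ell_j(x_e)\,e_j$, subject to the condition that every relation of $\E_\T$ act as zero on $M$. Setting $c_e^{(j)}:=\ell_j(x_e)$ and $S_j:=\{\,e\in E(\T):c_e^{(j)}\neq 0\,\}$ (a nonempty ``support''), the first step is to extract combinatorial constraints on the $S_j$ from the relations of $\E_\T$ we can name. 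From $x_e^2=0$ one gets $S_j\cap S_{j+1}=\emptyset$; from the commuting relations $x_ex_f=x_fx_e$ for vertex-disjoint edges $e,f$ one gets that consecutive supports are completely cross-adjacent (every edge of $S_j$ shares a vertex with every edge of $S_{j+1}$); and from the braid relation $x_ex_fx_e=x_fx_ex_f$ for adjacent edges $e,f$, which holds in $\E_n$ (the edges of a path satisfy the nil-Coxeter relations, see \cite{FK}), one gets $S_j\cap S_{j+2}=\emptyset$.

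The second step is a combinatorial analysis of such support sequences inside a tree. The target is to upgrade the constraints above to the assertion that \emph{all} of $S_1,\dots,S_n$ are pairwise disjoint: since they are nonempty subsets of the $\#E(\T)$-element set $E(\T)$, this immediately gives $n\le\#E(\T)$, i.e.\ $p(\E_\T)\le\#E(\T)$. For $|i-j|\le 2$ the disjointness was already obtained above, and when $\T$ is a path these relations in fact force the support sequence to march monotonically along the path (the condition $S_j\cap S_{j+2}=\emptyset$ rules out backtracking), so there all supports are distinct singletons and $n\le\#E(\T)$. The hard part — which I expect to be the main obstacle — is to rule out, for a general tree, ``long-range'' recurrences, the prototypical one being a support sequence that cycles around a branch vertex ($\{g_1\},\{g_2\},\{g_3\},\{g_1\},\dots$ among three edges through a common vertex), a pattern which is compatible with every relation named so far. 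Excluding it needs further relations of $\E_\T$; since no general presentation is known, one must bring in extra structural input on generalized Fomin--Kirillov algebras of trees (vanishing of the offending ``branch'' monomials, or finite-dimensionality of $\E_\T$ for forests) in order to obtain the missing disjointness $S_i\cap S_j=\emptyset$ for $|i-j|\ge 3$ and conclude.

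Finally, for sharpness I would take $\T=P_{m+1}$, the path with $m$ edges $e_1,\dots,e_m$, so that $\E_{P_{m+1}}$ is the nil-Coxeter algebra of $S_{m+1}$, and exhibit an explicit $m$-truncated point module realizing the bound. Let $M=\bigoplus_{j=0}^{m}k e_j$ with $x_{e_i}\cdot e_j:=\lambda_{j+1}\,\delta_{i,\,j+1}\,e_{j+1}$ for arbitrary scalars $\lambda_1,\dots,\lambda_m\in k^{\times}$, so that $S_j=\{e_j\}$ marches along the path. A direct check shows that every nil-Coxeter relation annihilates $M$: each of the relevant compositions $x_{e_a}x_{e_b}$, $x_{e_a}x_{e_b}x_{e_a}$, and so on collapses because the required pattern of indices cannot be met. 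Hence $M$ is a genuine $\E_{P_{m+1}}$-module; it is cyclic with Hilbert series $1+t+\cdots+t^{m}$, so $M\in\P_m(\E_{P_{m+1}})$. Combined with the upper bound this yields $p(\E_{P_{m+1}})=m$, so the bound in the statement is attained.
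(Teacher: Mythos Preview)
Your proposal correctly extracts the quadratic and cubic constraints (disjointness of $S_j$ and $S_{j+1}$, cross-adjacency of consecutive supports, disjointness of $S_j$ and $S_{j+2}$ via the braid relation), and your sharpness argument via the path is fine and matches the paper's use of $A_{m+1}$. But you yourself flag the real gap and do not close it: nothing you prove excludes the ``branch cycling'' pattern $\{g_1\},\{g_2\},\{g_3\},\{g_1\},\dots$ at a vertex of degree $\ge 3$, and appealing vaguely to ``vanishing of branch monomials'' or ``finite-dimensionality of $\E_\T$ for forests'' is not a proof. As written, the upper bound is not established.

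The missing idea is exactly what the paper supplies. First, pick one edge $a_i\in S_i$ from each support; by pigeonhole some $a_i=a_j$ with $j-i$ minimal, and consecutive $a_t$'s are adjacent in $\T$. This yields a cycle in the line graph $L(\T)$. Since $\T$ is a tree, $L(\T)$ is a block graph, so this cycle sits inside a clique; an elementary argument then forces the edges $x_{a_i},\dots,x_{a_{j-1}}$ to share a \emph{single} common vertex, i.e.\ to form a star $K_{1,j-i}\subseteq\T$. Second, one invokes the \emph{cyclic relation} in $\E_n$ (Fomin--Kirillov), which for a star says
\[
\sum_{r} x_{t_r} x_{t_{r-1}}\cdots x_{t_1} x_{t_l}\cdots x_{t_{r+1}} x_{t_r}=0,
\]
and this kills any truncated point module over $\E_{K_{1,m}}$ of length $>m$ (Proposition~\ref{prop:star}). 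Restricting your module to the span $ke_i\oplus\cdots\oplus ke_{j+1}$ and to the star subalgebra gives a contradiction. In short, your approach points in the right direction, but the paper's argument needs two specific ingredients you omit: the block-graph structure of $L(\T)$ to reduce any recurrence to a star, and the cyclic relation to handle the star case.
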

\noindent For instance, $p(\E_{A_n})=n-1$ for the Dynkin diagram $A_n$ with $n-1$ edges (Proposition \ref{prop:An}).

\bigskip

\noindent \emph{Conventions and notations}: All of the algebras in this paper are connected graded (that is, their degree-$0$ part is the base field) and generated by their degree-$1$ homogeneous components. We denote by $K_n$ the complete graph on $n$ vertices and by $K_{n,m}$ the complete bipartite graph on $n+m$ vertices. For an $n\times m$ matrix $M$ we denote by $M(i_1,\dots,i_k)$ its $k\times m$ sub-matrix consisting of rows $i_1,\dots,i_k$.

\section*{Acknowledgements}

The authors thank James Zhang for interesting related discussions. This project has originated from the Washington eXperimental Math Lab (WXML) at the University of Washington under the mentorship of the first-named author.

\section{Modules over Fomin-Kirillov algebras}

Let $A=k\left<x_1,\dots,x_d\right>/\left<f_1,f_2,\dots\right>$ be a graded algebra. For an $n$-truncated point module $M$ over $A$ we can fix a homogeneous basis $M=ke_0\oplus \cdots \oplus ke_n$ and encode the action of $A$ on $M$ by constants $\lambda_i^{j}\in k,\ 1\leq i\leq d,\ 0\leq j\leq n-1$ such that:
\[
x_i \cdot e_j = \lambda_i^{j} e_{j+1}.
\]
Thus $([\lambda_1^{0}\colon \cdots \colon \lambda_d^{0}],\dots,[\lambda_1^{n-1}\colon \cdots \colon \lambda_d^{n-1}])\in \mathbb{P}^{d-1}\times \cdots \times \mathbb{P}^{d-1}$. Those points in the product of projective spaces corresponding to truncated point modules can be found as the solutions of a concrete system of homogeneous equations derived from the presentation $k\left<x_1,\dots,x_d\right>/\left<f_1,f_2,\dots\right>$ by multilinearization, namely, each relation $f=\sum_I c_Ix_{i_1}\cdots x_{i_t}$ induces the polynomial equations $\sum_I c_I \lambda_{i_1}^{s+t-1}\cdots \lambda_{i_t}^s=0$ for each $0\leq s\leq n-t$. The set of points in $\left(\mathbb{P}^{d-1}\right)^{\times n}$ solving these multilinearized polynomial equations is in bijection with $\P_n(A)$, the set of $n$-truncated point modules over $A$; see \cite[Proposition 3.9]{ATV} and \cite{Rogalski}.

Let us focus on the Fomin-Kirillov algebra $\E_n$ for some $n$. It has the following presentation: $\E_n$ is generated by $\{x_{ij}\ |\ 1\leq i<j\leq n\}$ (all of degree $1$), subject to the relations:
\begin{eqnarray*}
  &(R^1_{ij})&   x_{ij}^2 = 0\ \ \text{for all}\ i<j \\
   &(R^2_{ij,kl})& x_{ij}x_{kl} - x_{kl}x_{ij} = 0 \ \ \text{for all}\ i<j,\ k<l  \\
   &(R^3_{ijk})& x_{ij}x_{jk} - x_{jk}x_{ik} - x_{ik}x_{ij} = 0\ \ \text{for all}\ i<j<k  \\
   &(R^4_{ijk})& x_{jk}x_{ij} - x_{ik}x_{jk} - x_{ij}x_{ik} = 0\ \ \text{for all}\ i<j<k.
\end{eqnarray*}
In the space of $2$-truncated point modules:
\[
\mathcal{P}_2(\E_n) \subseteq \mathbb{P}^{{n \choose 2} - 1} \times \mathbb{P}^{{n \choose 2} - 1}
\]
with homogeneous coordinates $([x_{12}\colon\cdots\colon x_{n-1\ n}],[y_{12}\colon\cdots\colon y_{n-1\ n}])$, we have the following polynomial equations:

\begin{eqnarray*}
  &(E^1_{ij})&   y_{ij}x_{ij} = 0\ \ \text{for all}\ i<j \\
   &(E^2_{ij,kl})& y_{ij}x_{kl} - y_{kl}x_{ij} = 0 \ \ \text{for all}\ i<j,\ k<l  \\
   &(E^3_{ijk})& y_{ij}x_{jk} - y_{jk}x_{ik} - y_{ik}x_{ij} = 0\ \ \text{for all}\ i<j<k  \\
   &(E^4_{ijk})& y_{jk}x_{ij} - y_{ik}x_{jk} - y_{ij}x_{ik} = 0\ \ \text{for all}\ i<j<k. 
\end{eqnarray*}

\begin{prop}\label{prop:E3}
The algebra $\E_3$ admits no truncated point modules of degree greater than $1$.
\end{prop}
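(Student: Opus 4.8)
The plan is to reduce to the degree-$2$ case and then solve a small system of bilinear equations by hand. First I would observe that the truncation map $M \mapsto M/M_{\geq 3}$ sends any $d$-truncated point module with $d \geq 2$ to a $2$-truncated point module, so it suffices to prove $\P_2(\E_3) = \emptyset$; together with $\P_1(\E_3) \cong \mathbb{P}^2 \neq \emptyset$ this also gives $p(\E_3) = 1$, the minimal possible value.

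Next I would write down the equations cutting out $\P_2(\E_3)$ inside $\mathbb{P}^2 \times \mathbb{P}^2$, with homogeneous coordinates $([x_{12}\colon x_{13}\colon x_{23}],[y_{12}\colon y_{13}\colon y_{23}])$, by specializing the multilinearized relations recorded above to $n = 3$. There are no equations coming from $(E^2_{ij,kl})$ since no two distinct edges of $K_3$ are disjoint (and reading those relations as applying also to edges sharing a vertex would only impose extra constraints, which we will not even need). Thus the system reduces to $y_{12}x_{12} = y_{13}x_{13} = y_{23}x_{23} = 0$ from $(E^1)$, together with $y_{12}x_{23} - y_{23}x_{13} - y_{13}x_{12} = 0$ from $(E^3_{123})$ and $y_{23}x_{12} - y_{13}x_{23} - y_{12}x_{13} = 0$ from $(E^4_{123})$.

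Then, assuming for contradiction a solution with $(x_{12},x_{13},x_{23}) \neq 0$ and $(y_{12},y_{13},y_{23}) \neq 0$, I would run a short case analysis on the vanishing pattern of the $x_{ij}$. The recurring mechanism is the same each time: whenever $x_{ij} \neq 0$, relation $(E^1_{ij})$ forces $y_{ij} = 0$, and substituting the coordinates already known to vanish into $(E^3_{123})$ and $(E^4_{123})$ collapses each of those to a single monomial, which in turn forces another $y_{kl}$ to vanish; iterating kills all of $y_{12},y_{13},y_{23}$, a contradiction. Concretely, this argument first forces $x_{12} = 0$, then — now using $x_{12} = 0$ — forces $x_{13} = 0$, so that $x_{23} \neq 0$, and finally $(E^1_{23})$, $(E^3_{123})$, $(E^4_{123})$ kill $y_{23}$, $y_{12}$, $y_{13}$ in turn.

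I do not anticipate a genuine obstacle here: essentially the whole content is the bookkeeping of which generators vanish, and for $K_3$ this is very light (the serious version of exactly this bookkeeping is what will be required for general $\E_n$). The only step worth stating with a bit of care is the initial reduction from truncated point modules of arbitrary degree down to degree $2$.
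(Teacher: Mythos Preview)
Your approach is correct and essentially the same as the paper's: reduce to degree $2$, write the bilinear equations, and case-split on the vanishing pattern of the $x_{ij}$ to force all $y_{ij}=0$. The paper packages this as showing that a $5\times 3$ coefficient matrix $M$ (rows from $(E^3_{123}),(E^4_{123}),(E^1_{12}),(E^1_{13}),(E^1_{23})$) has rank $3$ for every nonzero $(x_{12},x_{13},x_{23})$, which is exactly your case analysis in linear-algebra clothing. One small imprecision to watch: in your first step (assuming $x_{12}\neq 0$), knowing only $y_{12}=0$ does \emph{not} collapse $(E^3_{123})$ or $(E^4_{123})$ to a single monomial---you still need a sub-case on $x_{13}$ (or an appeal to the $S_3$-symmetry) before the cascade works; this is the kind of thing the matrix formulation absorbs automatically.
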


\begin{proof}
Consider:
\[
\mathcal{P}_2(\E_3) \subseteq \mathbb{P}^2_{x_{12},x_{13},x_{23}} \times \mathbb{P}^2_{y_{12},y_{13},y_{23}}
\]
and organize the defining polynomial equations in a matrix form:
\[
\begin{matrix}
(E^3_{123}) \\    
(E^4_{123}) \\    
(E^1_{12}) \\    
(E^1_{13}) \\    
(E^1_{23})
\end{matrix}
\ \ \ 
\underbrace{\left(\begin{matrix}
x_{23} & -x_{12} & -x_{13} \\
-x_{13} & -x_{23} & x_{12} \\
x_{12} & 0 & 0 \\
0 & x_{13} & 0 \\
0 & 0 & x_{23}
\end{matrix}\right)}_{M}
\left(\begin{matrix}
y_{12} \\
y_{13} \\
y_{23} \\
\end{matrix}\right) = \left(\begin{matrix}0 \\ 0 \\ 0\\ 0\\ 0 \end{matrix}\right).
\]
If $x_{12},x_{13},x_{23}$ are all non-zero then $M(3,4,5)$ is invertible. If exactly one of $x_{12},x_{13},x_{23}$ is zero then one of $M(1,4,5),M(1,3,5),M(1,3,4)$ is invertible (respectively). 
If exactly one of $x_{12},x_{13},x_{23}$ is non-zero then one of $M(1,2,3),M(1,2,4),M(1,2,5)$ is invertible (respectively). 
In any case, we see that $\text{rank}(M)=3$, so the only solution to the homogeneous equation $M\cdot \vec{y}=\vec{0}$ is the trivial solution, hence $y_{12}=y_{13}=y_{23}=0$. This shows that $\P_2(\E_3)=\emptyset$.
\end{proof}

\begin{prop}\label{prop:E4}
The algebra $\E_4$ admits no truncated point modules of degree greater than $1$.
\end{prop}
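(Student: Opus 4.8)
The plan is to mimic the matrix argument used for $\E_3$ but adapted to $\E_4$, where now $\binom{4}{2}=6$, so we work in $\mathbb{P}^5_{x_{ij}}\times\mathbb{P}^5_{y_{ij}}$ with coordinates indexed by the six edges $12,13,14,23,24,34$. First I would assemble all the multilinearized equations $(E^1_{ij})$, $(E^2_{ij,kl})$, $(E^3_{ijk})$, $(E^4_{ijk})$ into a single homogeneous linear system $M\vec y = \vec 0$, where $M$ is a matrix whose entries are (signed) coordinates $x_{ij}$ and $\vec y=(y_{12},y_{13},y_{14},y_{23},y_{24},y_{34})^{T}$. The goal, exactly as in Proposition~\ref{prop:E3}, is to show $\operatorname{rank}(M)=6$ identically on $\mathbb{P}^5_x$, which forces $\vec y=\vec 0$ and hence $\P_2(\E_4)=\emptyset$; combined with the fact that higher truncated point modules restrict to $2$-truncated ones, this gives $p(\E_4)=1$.

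Since $M$ now has many more rows than the $5\times 3$ matrix in the $\E_3$ case, the efficient approach is a case analysis on which of the six $x_{ij}$ vanish, rather than writing out one giant matrix. The cleanest organizing principle: the six relations $(E^1_{ij})$ say that for every nonzero $x_{ij}$ we immediately get $y_{ij}=0$. So I would split into cases according to the \emph{support} $S=\{(i,j): x_{ij}\neq 0\}$ (nonempty since $[x_{ij}]\in\mathbb{P}^5$). For $(i,j)\in S$ we have $y_{ij}=0$ for free, and it remains to kill the $y_{ij}$ with $(i,j)\notin S$ using the remaining equations $(E^2),(E^3),(E^4)$. The key observation is that for any pair of distinct edges $e,f$, if exactly one of $x_e,x_f$ is nonzero, then the relevant $(E^2)$ or $(E^3)/(E^4)$ equation links $y_e$ to $y_f$ (times a nonzero scalar), propagating the vanishing; and if a triangle $\{i,j,k\}$ has, say, $x_{ij}\neq 0$ but $x_{jk}=x_{ik}=0$, then $(E^4_{ijk})$ reads $y_{jk}x_{ij}=0$, so $y_{jk}=0$, and symmetrically via $(E^3)$. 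Running through the (few, up to symmetry of $S_4$ acting on edges) shapes of $S$ — $|S|\in\{1,2,\dots,6\}$, classified by whether $S$ is a matching, a star, a path, a triangle, a triangle-plus-edge, etc. — should in every case cascade to $\vec y=\vec 0$.

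The main obstacle I anticipate is the case where $S$ is a \emph{perfect matching}, e.g. $S=\{12,34\}$ with $x_{13}=x_{14}=x_{23}=x_{24}=0$: here $(E^1)$ only gives $y_{12}=y_{34}=0$ directly, and the triangle relations for, say, $\{1,2,3\}$ become $y_{12}x_{23}-y_{23}x_{13}-y_{13}x_{12}=0$, i.e. $y_{13}x_{12}=0$, so $y_{13}=0$; similarly every "missing" edge sits in a triangle with one edge of the matching, so $(E^3)$ or $(E^4)$ kills it. So in fact the matching case is fine — the genuinely delicate check is making sure no configuration of $S$ leaves some $y_e$ isolated, i.e.\ not co-occurring with any nonzero $x$ in any $(E^2)/(E^3)/(E^4)$ relation. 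Because $K_4$ is small and every edge lies in two triangles, I expect this never happens, but the bookkeeping — verifying that each of the handful of orbit-representatives of subsets $S\subseteq E(K_4)$ propagates to $\vec y=\vec 0$ — is where the real work lies. An alternative, possibly slicker, route: exhibit for each stratum an explicit invertible $6\times 6$ submatrix of $M$ (as the $\E_3$ proof does with the $3\times 3$ submatrices), so that $\operatorname{rank}(M)=6$ on all of $\mathbb{P}^5_x$ uniformly; I would try this second and fall back to the case analysis if the uniform submatrix choice is awkward.
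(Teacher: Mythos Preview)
Your plan is sound and will work. In fact your ``cascade'' observation can be sharpened to a one-line lemma that makes the case analysis essentially disappear: in any triangle $\{i,j,k\}$ containing at least one nonzero $x$-coordinate, \emph{all three} of $y_{ij},y_{ik},y_{jk}$ vanish. (Check the three sub-cases according to how many of $x_{ij},x_{ik},x_{jk}$ are nonzero; in each, the relevant $y$'s are killed either by $(E^1)$ or by one of $(E^3_{ijk}),(E^4_{ijk})$ after substituting the known zeros.) Since any nonzero $x_{ij}$ lies in two triangles of $K_4$ which together cover five of the six edges, and $(E^2_{ij,kl})$ then kills the sixth $y$, you are done without ever enumerating the shapes of $S$.

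The paper takes a different, more computational route: it first uses transitivity of $\mathrm{Aut}(\E_4)$ on generators to normalize $x_{12}=1$, immediately deduces $y_{12}=y_{34}=0$ from $(E^1_{12})$ and $(E^2_{12,34})$, and then writes an explicit $8\times 4$ matrix in the remaining unknowns $y_{13},y_{14},y_{23},y_{24}$. After one row-reduction step it does a short case analysis on whether $1+x_{13}x_{23}$ and $1+x_{14}x_{24}$ vanish, exhibiting an invertible $4\times 4$ minor in each case. Your approach trades the symmetry reduction and row reduction for a direct combinatorial propagation argument; it is arguably cleaner conceptually (and closer to how one would attack larger $n$), while the paper's is shorter on the page once the normalization is in hand. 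Your ``alternative route'' of exhibiting invertible submatrices is exactly what the paper does, just after first cutting the system down to $4\times 4$.
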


\begin{proof}
Consider:
\[
\mathcal{P}_2(\E_4) \subseteq \mathbb{P}^5_{x_{12},x_{13},x_{14},x_{23},x_{24},x_{34}} \times \mathbb{P}^5_{y_{12},y_{13},y_{14},y_{23},y_{24},y_{34}}
\]
and assume that there is some point ${\bf x} = ([x_{12}\colon \cdots \colon x_{34}],[y_{12}\colon \cdots \colon y_{34}])$ in $\P_2(\E_4)$. We may assume that $x_{12}\neq 0$, since at least one of the coordinates $x_{ij}$ is non-zero and $\text{Aut}(\E_n)$ acts transitively on the generators. 
Furthermore, we may assume that $x_{12} = 1$. Now by $(E^1_{12})$, we have that $y_{12}=0$. By $(E^2_{12,34})$, we have $y_{12}x_{34} = y_{34}x_{12}$ so $y_{34}=0$. The following homogeneous system holds:

\[
\begin{matrix}
    (E^3_{123}) \\
    (E^4_{123}) \\
    (E^3_{124}) \\
    (E^4_{124}) \\
    (E^2_{13,24}) \\
    (E^1_{14}) \\
    (E^1_{23}) \\
    (E^1_{24})
\end{matrix}
\ \ \ 
\underbrace{\left(\begin{matrix}
1 & 0 & x_{13} & 0 \\
-x_{23} & 0 & 1 & 0 \\
0 & 1 & 0 & x_{14} \\
0 & -x_{24} & 0 & 1 \\
x_{24} & 0 & 0 & -x_{13} \\
0 & x_{14} & 0 & 0 \\
0 & 0 & x_{23} & 0 \\
0 & 0 & 0 & x_{24}
\end{matrix}\right)}_{M}
\left(\begin{matrix}
y_{13} \\
y_{14} \\
y_{23} \\
y_{24}
\end{matrix}\right) = \left(\begin{matrix}0 \\ 0 \\ 0 \\ 0\\ 0 \\ 0 \\ 0 \\ 0\end{matrix}\right).
\]
Now $M$ is row equivalent (by adding a multiple of the 1st row to the 2nd row and a multiple of the 3rd row to the 4th row) to:
\[
M' = \left(\begin{matrix}
1 & 0 & x_{13} & 0 \\
0 & 0 & 1+x_{13}x_{23} & 0 \\
0 & 1 & 0 & x_{14} \\
0 & 0 & 0 & 1+x_{14}x_{24} \\
x_{24} & 0 & 0 & -x_{13} \\
0 & x_{14} & 0 & 0 \\
0 & 0 & x_{23} & 0 \\
0 & 0 & 0 & x_{24}
\end{matrix}\right).
\]
If both $1+x_{13}x_{23},1+x_{14}x_{24}$ are non-zero then $M'(1,2,3,4)$ is invertible. Assume otherwise. If $1+x_{13}x_{23}=0$ then in particular $x_{13},x_{23}\neq 0$, and $M'(1,3,5,7)$ is invertible. If $1+x_{13}x_{23}\neq 0$ but $1+x_{14}x_{24}=0$ then in particular $x_{14},x_{24}\neq 0$, and $M'(1,2,3,8)$ is invertible. It follows that in any case, $\text{rank}(M)=\text{rank}(M')=4$ and therefore the only solution to the homogeneous equation $M\cdot \vec{y}=\vec{0}$ is when $\vec{y}=0$, so $y_{12}=y_{13}=y_{14}=y_{23}=y_{24}=y_{34}=0$. It follows that $\P_2(\E_4)=\emptyset$.
\end{proof}

\begin{proof}[{Proof of Theorem \ref{thm:fk}}]
It suffices to prove the claim for truncated point modules of degree $2$, since we have a sequence of morphisms given by truncation: \[ \cdots \rightarrow \P_3(\E_n) \rightarrow \P_2(\E_n). \] Let $n\geq 3$ and consider a truncated point module $M$ over $\mathcal{E}_n$ of degree $2$ and fix a homogeneous basis, say, $M=ke_0\oplus ke_1 \oplus ke_2$. Pick arbitrary distinct generators $x_{ij},x_{kl}\in \E_n$ (the indices $i,j,k,l$ need not be distinct). Consider the subalgebra $A\subseteq \E_n$ generated by $\{x_{pq}|p,q\in \{i,j,k,l\}\}$. Notice that $A\cong \E_3$ (if $|\{i,j,k,l\}|=3$) or $A\cong \E_4$ (if $|\{i,j,k,l\}|=4$) as graded algebras. Indeed, if at least one of $r,s,t,u$ is not in $\{i,j,k,l\}$ then the relations $(R^{1}_{rs}),(R^{2}_{rs,tu}),(R^{3}_{rst}),(R^{4}_{rst})$ all become trivial modulo the ideal generated by $\{x_{pq}|p\notin \{i,j,k,l\}\ \text{or}\ q\in \{i,j,k,l\}\}$.

By restriction, $M$ is also a graded $A$-module. By Propositions \ref{prop:E3},\ref{prop:E4}, $M$ cannot be generated by $e_0$. Therefore, either $A_1\cdot e_0=0$ or $A_1\cdot e_1=0$, so, since $A$ is generated in degree $1$, $A_2\cdot e_0=(A_1)^2\cdot e_0=0$. In particular, $x_{ij}x_{kl}\cdot e_0=0$. Since $x_{ij},x_{kl}$ were arbitrary (distinct) generators of $\E_n$ (and all of the generators are squared zero), it follows that $(\E_n)_2\cdot e_0 = 0$, so $M$ is not generated by its degree-$0$ component. Therefore $\E_n$ admits no truncated point modules of degree $\geq 2$.
\end{proof}

\section{Generalized Fomin-Kirillov Algebras}

\begin{cor}
Let $G$ be a graph on $n$ vertices. Then $\mathcal{P}_2(\E_G)=\emptyset$ if and only if $G$ is a disjoint union of complete graphs.
\end{cor}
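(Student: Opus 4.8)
The statement is an equivalence, and the plan is to prove the two implications separately, using the elementary graph-theoretic fact that $G$ is a disjoint union of complete graphs if and only if it has no induced path $P_3$ on three vertices --- equivalently, there are no distinct vertices $a,c$ and a vertex $b$ with $\{a,b\},\{b,c\}\in E(G)$ but $\{a,c\}\notin E(G)$ (``adjacent-or-equal'' is an equivalence relation exactly when each connected component is a clique). Throughout I will use the observation that for a graded module $M$ concentrated in degrees $0,1,2$, only the degree-$2$ relations of $\E_G$ can impose a nontrivial condition: relations of degree $\geq 3$ act by zero for degree reasons, and $\E_G$ has no degree-$1$ relations, since the generators $x_e$, $e\in E(G)$, are linearly independent (being part of a basis of $\E_n$).

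For the implication ``disjoint union of complete graphs $\Rightarrow$ no $2$-truncated point modules'', suppose $M=ke_0\oplus ke_1\oplus ke_2\in\P_2(\E_G)$ and write $x_e\cdot e_0=\lambda_e e_1$, $x_e\cdot e_1=\mu_e e_2$ for $e\in E(G)$, so that $x_e^2=0$ forces $\lambda_e\mu_e=0$. Since $M_1\neq0$ and $M_2=(\E_G)_1 e_1\neq0$, pick $e$ with $\lambda_e\neq0$, rescaled so $\lambda_e=1$, and $f$ with $\mu_f\neq0$; then $\mu_e=0$ and $\lambda_f=0$, whence $e\neq f$, while $x_f x_e\cdot e_0=\mu_f e_2\neq0$. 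If $e,f$ are vertex-disjoint they commute in $\E_n$ by $(R^2)$, so $x_f x_e\cdot e_0=x_e x_f\cdot e_0=\lambda_f(x_e\cdot e_1)=0$, a contradiction. If instead $e,f$ share a vertex, then $e\cup f$ is a set of three vertices lying in one connected component of $G$, which is complete; hence the three edges on those vertices lie in $\E_G$, and, satisfying the defining relations of $\E_3$, they generate a subalgebra $A\subseteq\E_G$ that is a quotient of $\E_3$. Restricting $M$ to $A$ and using that $\P_2$ of any quotient of $\E_3$ is empty (Proposition \ref{prop:E3}, since a $2$-truncated point module over a quotient pulls back to one over $\E_3$), $M$ cannot be generated over $A$ by $e_0$; since $x_e\cdot e_0=e_1\neq0$ this forces $A_1\cdot e_1=0$, contradicting $x_f\cdot e_1\neq0$. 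So $\P_2(\E_G)=\emptyset$.

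For the converse I argue the contrapositive: if $G$ is not a disjoint union of complete graphs, fix $a,b,c$ with $\{a,b\},\{b,c\}\in E(G)$ and $\{a,c\}\notin E(G)$, and define $M=ke_0\oplus ke_1\oplus ke_2$ by $x_{ab}\cdot e_0=e_1$, $x_{bc}\cdot e_1=e_2$, with all other generators and everything in degree $2$ acting by zero. Then $M$ is cyclic in degree $0$ with Hilbert series $1+t+t^2$, so it remains only to check that $M$ is a well-defined $\E_G$-module, that is, that every degree-$2$ relation $r=\sum_{e',f'\in E(G)}c_{e',f'}x_{e'}x_{f'}$ has $c_{\{b,c\},\{a,b\}}=0$ (for then $r\cdot e_0=c_{\{b,c\},\{a,b\}}e_2=0$). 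To see this, apply the algebra surjection $\pi\colon\E_n\twoheadrightarrow\E_3$ sending $x_{ab},x_{bc},x_{ac}$ to the three generators of $\E_3$ (via the order-preserving identification of $\{a,b,c\}$ with $\{1,2,3\}$) and all remaining generators to $0$; this is well defined by the same collapsing-of-superfluous-relations argument used in the proof of Theorem \ref{thm:fk}. As $\{a,c\}\notin E(G)$, the only $E(G)$-edges surviving $\pi$ are $\{a,b\}$ and $\{b,c\}$, so $\pi(r)=c_{\{a,b\},\{b,c\}}\,\overline{\pi(x_{ab})\pi(x_{bc})}+c_{\{b,c\},\{a,b\}}\,\overline{\pi(x_{bc})\pi(x_{ab})}$, the squared terms vanishing, and $\pi(r)=0$ because $r=0$ in $\E_n$. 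A short direct computation with the relations $(R^1),(R^3),(R^4)$ of $\E_3$ shows $(\E_3)_2$ is $4$-dimensional and that $\overline{uv},\overline{vu}$ are linearly independent there for any two distinct generators $u,v$ of $\E_3$; hence $c_{\{b,c\},\{a,b\}}=0$, and $M\in\P_2(\E_G)$.

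The crux is the converse: since generalized Fomin--Kirillov algebras admit no known presentation, one cannot list the relations of $\E_G$ outright. The resolution is the reduction noted in the first paragraph --- only degree-$2$ relations affect a module concentrated in degrees $0,1,2$ --- together with detecting those relations after collapsing all generators outside the triangle $\{a,b,c\}$, which turns the obstruction into a finite computation inside the $12$-dimensional algebra $\E_3$. Everything else is routine linear algebra.
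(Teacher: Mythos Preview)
Your proof is correct and follows the same overall strategy as the paper's, but with two pleasant economies worth noting.

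For the forward direction, the paper splits on whether the two ``active'' generators lie in the same connected component or not, invoking the full Theorem~\ref{thm:fk} (hence also Proposition~\ref{prop:E4}) in the former case. Your split on ``vertex-disjoint vs.\ sharing a vertex'' is finer: in the second case the three vertices involved span a triangle inside a single clique, so only Proposition~\ref{prop:E3} is needed. This bypasses $\E_4$ entirely.

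For the converse, the paper quotes the nil-Coxeter presentation of $\E_{A_3}$ from \cite{GeneralizedFK} and exhibits $\E_{A_3}/\langle a\rangle$ as a $2$-truncated point module, then inflates along $\E_G\twoheadrightarrow\E_{A_3}$. You instead define the module action directly and verify well-definedness by pushing every degree-$2$ relation through $\pi\colon\E_n\twoheadrightarrow\E_3$ and reading off the coefficient of $\pi(x_{bc})\pi(x_{ab})$ inside $(\E_3)_2$. This is essentially the same projection (restricted to $\E_G$, its image is exactly $\E_{A_3}\subset\E_3$), but your version is self-contained: it needs only the presentation of $\E_3$ and the elementary check that $uv,vu$ are independent in $(\E_3)_2$, rather than the cited result on $\E_{A_3}$.
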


\begin{proof}
Let $G = \bigsqcup_{i=1}^{m} K_{n_i}$ be a disjoint union of complete graphs. If there is a truncated point module $M = ke_0\oplus ke_1 \oplus ke_2$ over $\E_G$ --- and hence over $\bigotimes_{i=1}^{m} \E_{n_i}$ --- then for some $1\leq i,j\leq m$, we have $x_{pq}\in \E_{K_{n_i}}$ and $x_{rs}\in \E_{K_{n_j}}$ such that $x_{pq}\cdot e_0 \neq 0,x_{rs}\cdot e_1\neq 0$. If $i=j$, we obtain a contradiction to Theorem \ref{thm:fk}, since then $M$ becomes a degree-$2$ point module over $\E_{n_i}$. If $i\neq j$ then $0\neq x_{rs}x_{pq}\cdot e_0 = x_{pq}\cdot(x_{rs}\cdot e_0)$, but $x_{rs}\cdot e_0=0$ (for otherwise $x_{rs}^2\cdot e_0$ is a non-zero scalar multiple of $e_2$, contradicting that $x_{rs}^2=0$), a contradiction.

Conversely, assume that $G$ is not a disjoint union of complete graphs; pick a connected component which is not a complete graph. We can find distinct vertices $p,q,r$ such that the edges $(p,q),(q,r)$ are in $G$ but $(p,r)$ is not (it follows by induction that a finite connected graph G such that whenever $(p,q),(q,r)$ are in G, $(p,r)$ also is in G, is a complete graph). 
Thus the subgraph of $G$ consisting of the vertices $p,q,r$ is $A_3$ (this is a graph with three vertices and two edges) and by \cite[Example 2.2 \ \text{and}\ Theorem 6.1]{GeneralizedFK}, $\E_{A_3}\cong k\left<a,b\right>/\left<a^2,b^2,aba-bab\right>$. Consider the left ideal $L = \left<a\right>\leq \E_{A_3}$ and observe that $\E_{A_3}/L\cong k \oplus kb \oplus kab$ is a truncated point module of degree $2$. Notice that $\E_G\twoheadrightarrow \E_{A_3}$, moding out by all edges incidenting with a vertex outside $\{p,q,r\}$. By inflation, we obtain a $2$-truncated point module over $\E_G$, and it follows that $\P_2(\E_G)\neq \emptyset$.
\end{proof}

\begin{prop} \label{prop:An}
Let $n\geq 3$ and let $A_n$ be the graph with vertices $\{1,\dots,n\}$ and with edges $(i,i+1)$ for $1\leq i\leq n-1$. Then:
$$ \# \mathcal{P}_d(\mathcal{E}_{A_n}) = \begin{cases} 
      2n-2d & \text{if}\ 3\leq d\leq n-1 \\
      0 & \text{if}\  d\geq n
   \end{cases}
$$
and $\P_2(\E_{A_n})$ is a union of $2n-6$ copies of $\mathbb{P}^1$ if $n>3$, and if $n=3$, two points.
\end{prop}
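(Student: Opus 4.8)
The plan is to translate the problem into the multilinearized equations of Section~2 and reduce it to a combinatorial statement about walks on the path graph $A_{n-1}$. Write $a_i := x_{i,i+1}$ ($1\le i\le n-1$) for the generators of $\E_{A_n}$. From the defining relations $R^1$--$R^4$ of $\E_n$ one reads off the identities that survive in the subalgebra $\E_{A_n}$: $a_i^2 = 0$; $a_ia_j = a_ja_i$ whenever $|i-j|\ge 2$ (disjoint edges); and the braid identity $a_ia_{i+1}a_i = a_{i+1}a_ia_{i+1}$ for $1\le i\le n-2$ --- the last because the subalgebra of $\E_n$ generated by $\{x_{i,i+1},x_{i+1,i+2},x_{i,i+2}\}$ is a copy of $\E_3$, in which $x_{i,i+1}x_{i+1,i+2}x_{i,i+1} = x_{i+1,i+2}x_{i,i+1}x_{i+1,i+2}$ holds by the presentation of $\E_{A_3}$ recalled above. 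These identities are all that is needed for the forward direction; for the converse I would establish (or invoke from \cite{GeneralizedFK}, the base case $n=3$ being the recalled presentation of $\E_{A_3}$) that they in fact present $\E_{A_n}$, equivalently that $\dim_k\E_{A_n}=n!$ --- i.e.\ that $\E_{A_n}$ is the nilCoxeter algebra of $S_n$.

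By the correspondence in Section~2, a $d$-truncated point module is a point $(v^{(0)},\dots,v^{(d-1)})\in(\mathbb{P}^{n-2})^{\times d}$, $v^{(j)} = [\lambda^j_1:\dots:\lambda^j_{n-1}]$, satisfying $\lambda^{j+1}_i\lambda^j_i = 0$, $\ \lambda^{j+1}_i\lambda^j_k = \lambda^{j+1}_k\lambda^j_i$ for $|i-k|\ge 2$, and $\lambda^{j+2}_i\lambda^{j+1}_{i+1}\lambda^j_i = \lambda^{j+2}_{i+1}\lambda^{j+1}_i\lambda^j_{i+1}$. Let $S_j := \{i : \lambda^j_i\ne 0\}\subseteq\{1,\dots,n-1\}$, necessarily nonempty. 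The quadratic equations give $S_j\cap S_{j+1}=\emptyset$ and $|a-b|\le 1$ for all $a\in S_j$, $b\in S_{j+1}$; analysing these forces every $S_j$ with both neighbours present to be either a singleton $\{b\}$ or a ``branch'' $\{b-1,b+1\}$ (in which case $S_{j-1}=S_{j+1}=\{b\}$), and more generally forces $S_j=\{b_j\}$ with $|b_{j+1}-b_j|=1$, with the sole possible exception of a branch at $j=0$ or $j=d-1$. The crux is then to feed this walk into the cubic relation: it is non-trivial exactly at a reversal $b_j=b_{j+2}$, or at a branch, and in each such case, evaluated at an appropriate index $i$, it equates a non-zero expression with zero --- a contradiction. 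Hence for $d\ge 3$ the solutions are exactly the \emph{straight} walks $b_j = b_0+j$ (ascending) or $b_j=b_0-j$ (descending), $0\le j\le d-1$, each a single point with $v^{(j)} = e_{b_j}$ (the $b_j$-th coordinate point of $\mathbb{P}^{n-2}$); for $d=2$ the cubic relation is vacuous, so one also gets the branch configurations, which carry a free ratio $[\lambda^j_{b-1}:\lambda^j_{b+1}]$ and hence trace out a $\mathbb{P}^1$.

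For the converse and the count: each straight walk underlies an honest module. Using the surjection $\E_{A_n}\twoheadrightarrow\E_{A_{d+1}}$ that kills all edges outside a length-$d$ sub-path (as in the proof of the Corollary) together with the nilCoxeter description of $\E_{A_{d+1}}$, the quotient of $\E_{A_{d+1}}$ by the left ideal generated by all generators except one of the two end ones has Hilbert series $1+t+\dots+t^d$ --- its degree-$j$ part is spanned by the unique length-$j$ permutation having a single right descent, namely at that end simple reflection --- and inflating it along the surjection yields the required straight-walk module over $\E_{A_n}$; the $d=2$ branch families are realized in the same way. An ascending straight walk of length $d-1$ in the $(n-1)$-vertex path is determined by its first vertex $b_0\in\{1,\dots,n-d\}$, and likewise the descending ones, giving $2(n-d)$ modules when $3\le d\le n-1$ and none when $d\ge n$. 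For $d=2$ the branch configurations give $2(n-3)$ copies of $\mathbb{P}^1$; I would finish by verifying that every length-$1$ straight walk appears as a coordinate point on one of these $\mathbb{P}^1$'s, so that $\P_2(\E_{A_n})$ is exactly their union when $n>3$, while for $n=3$ there are no branches and only the two length-$1$ walks $(e_1,e_2)$, $(e_2,e_1)$ remain.

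The main obstacle is obtaining a workable hold on $\E_{A_n}$ itself. The forward, combinatorial half (support sequences, reduction to walks, straightening via the cubic relation) is elementary but bookkeeping-heavy, and one must organize the case analysis of the cubic relation so that it covers every position and every value of $d$ uniformly. The real difficulty, however, is the input used in the first and third steps: establishing or importing the nilCoxeter presentation of $\E_{A_n}$ --- without it one cannot be sure that the straight-walk points actually correspond to $\E_{A_n}$-modules, only that $\P_d(\E_{A_n})$ is \emph{contained} in that set. A further point needing care is the $d=2$ statement: one must check that the finitely many ``straight'' $2$-truncated modules are not extra irreducible components but lie on the $\mathbb{P}^1$'s, and that the description degenerates correctly to two points when $n=3$.
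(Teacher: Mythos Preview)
Your approach is essentially the paper's own: invoke the nilCoxeter presentation of $\E_{A_n}$ from \cite{GeneralizedFK}, multilinearize, use the quadratic equations to reduce the supports $S_j$ to a walk on $\{1,\dots,n-1\}$ with steps $\pm 1$, and then apply the cubic (braid) relation to eliminate reversals and branches, leaving exactly the $2(n-d)$ monotone walks for $d\ge 3$ and the $2(n-3)$ projective lines for $d=2$. One small slip to tighten: the quadratic equations alone do \emph{not} force interior $S_j$ to be singletons---the alternating pattern $\{b\},\{b-1,b+1\},\{b\},\{b-1,b+1\},\dots$ satisfies them---but this is harmless, since your cubic step (evaluated at $i=b-1$ on three consecutive levels) kills any such interior branch just as it kills reversals, which is exactly how the paper proceeds.
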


\begin{proof}
By \cite[Theorem 6.1]{GeneralizedFK}, $\E_{A_n}$ has a (nil-Coxeter) presentation: $$ k\left<x_1,\dots,x_{n-1}\right>/\left<x_i^2,x_ix_j-x_jx_i,x_kx_{k+1}x_k - x_{k+1}x_kx_{k+1}\ :\  |i-j|>1,\ 1\leq k\leq n-2\right>. $$
Fix homogeneous coordinates for $\P_d(\E_{A_n})\subseteq \left(\mathbb{P}^{n-2}\right)^{\times {d}}$:
\[ {\bf x} = ([x^0_{1}\colon\cdots\colon x^0_{n-1}],\dots,[x^{d-1}_{1}\colon\cdots\colon x^{d-1}_{n-1}]) \]
and identify $\P_d(\E_{A_n})$ with the zero locus of the homogeneous polynomial system:
\begin{eqnarray*}
&(A_i^r)&\ x_i^{r+1}x_i^r \\ 
&(B_{ij}^r)&\ x_i^{r+1} x_j^r - x_j^{r+1} x_i^r,\\ 
&(C_k^s)&\ x_k^{s+2} x_{k+1}^{s+1} x_k^{s} - x_{k+1}^{s+2} x_k^{s+1} x_{k+1}^{s}
\end{eqnarray*}
for $1\leq i\leq n-1,\ |i-j|>1,\ 1\leq k\leq n-2,\ 0\leq r\leq d-2,\ 0\leq s\leq d-3$.

Fix $d\geq 3$ and fix a point ${\bf x}\in \P_d(\E_{A_n})$. Let $1\leq i\leq n-1$ be any index such that $x_i^0\neq 0$. Then by $(A_i^0)$, $x_i^{1}=0$ and by $(B_{ij}^0)$, $x_j^{1}=0$ for every $j\neq i\pm 1$. If both $x_{i-1}^{1},x_{i+1}^{1}\neq 0$ then by $(A_{i\pm 1}^1),(B_{i\pm 1,j}^1)$, $x_j^{2}=0$ for every $j\neq i$, which forces $x_i^{2}\neq 0$; now by $(C_i^0)$ we obtain $0\neq x_i^{2} x_{i+1}^{1} x_i^{0} = x_{i+1}^{2} x_i^{1} x_{i+1}^{0} = 0$ if $i\neq n-1$ (and $0\neq x_i^{2} x_{i-1}^{1} x_i^{0} = x_{i-1}^{2} x_i^{1} x_{i-1}^{0} = 0$ if $i=n-1\geq 2$, by $(C_{n-2}^0)$), a contradiction. Hence only one coordinate (either $i+1$ or $i-1$) in $[x_1^1\colon\cdots\colon x_{n-1}^1]$ is non-zero. By a similar argument, there exists a sequence $1\leq i_1,\dots,i_{d-1}\leq n-1$ such that for each $r\geq 1$ we have that $x_{i_r}^{r}\neq 0$ and $x_{i'}^{r}=0$ for $i'\neq i_r$, and moreover, $|i_{l+1}-i_l|=1$ and $|i_1-i|=1$. (Notice that for $[x_1^{d-1}\colon\cdots x_{n-1}^{d-1}]$ we can use a similar argument with $(C_{i_{d-3}}^{d-3})$ -- here $i_0=i$.) We claim that either $i<i_1<\cdots<i_{d-1}$ or $i>i_1>\cdots>i_{d-1}$. Otherwise, for some $l$, we have $i_{l+2}=i_l,i_{l+1}=i_l+1$ (or $i_{l+2}=i_l,i_{l+1}=i_l-1$) so by $(C_{i_l}^l)$, 
\[ 0\neq x_{i_{l+2}}^{l+2}x_{i_{l+1}}^{l+1}x_{i_l}^l = x_{i_l}^{l+2}x_{i_l+1}^{l+1}x_{i_l}^l = x_{i_l+1}^{l+2}x_{i_l}^{l+1}x_{i_l+1}^l = 0 \]
(and similarly if $i_{l+1}=i_l-1$).
Finally, notice that $x_{j}^0=0$ for all $j\neq i$. Indeed, by $(A_{i_1}^0)$ we have that $x_{i_1}^0=0$; by $(B_{i_1j}^0)$, $x_j^0=0$ for every $j$ such that $|j-i_1|>1$. Assume that $i_1=i+1$ (otherwise, $i_1=i-1$ and the argument is analogous); it remains to check that $x_{i+2}^0=0$, but otherwise we would have by $(C_{i+1}^0)$ that: 
\[ 0 = x_{i+1}^2x_{i+2}^1x_{i+1}^0 = x_{i+2}^2x_{i+1}^1x_{i+2}^0 = x_{i_2}^2x_{i_1}^1x_{i+2}^0 \neq 0, \]
a contradiction.
For conclusion, we proved that the only points in $\P_d(\E_{A_n})$ are $(e_i,e_{i+1},\dots,e_{i+d-1})$ for $1\leq i\leq n-d$ and $(e_i,e_{i-1},\dots,e_{i-(d-1)})$ for $d\leq i\leq n-1$, where here we denote by $e_i\in \mathbb{P}^{n-2}$ the point defined by $\{x_j=0\ |\ \forall j\neq i\}$. It is also straightforward to check that these points indeed lie in $\P_d(\E_{A_n})$. It follows that:
\[
\#\P_d(\E_{A_n}) = \#\{1,\dots,n-d\} + \#\{d,\dots,n-1\} = 2(n-d)
\]
as long as $3\leq d\leq n-1$ and $0$ if $d\geq n$, as claimed.

For the case of $d=2$, notice that degree-$2$ truncated point modules `see' only quadratic relations, in the sense that they arise from truncated point modules of the maximal quadratic approximation of $\E_{A_n}$:
 \[ k\left<x_1,\dots,x_{n-1}\right>/\left<x_i^2,\ x_ix_j-x_jx_i\ \colon\ |i-j|>1\right>. \] Fix a point ${ \bf x} \in \P_2(\E_{A_n}) \subseteq  \mathbb{P}^{n-2}\times \mathbb{P}^{n-2}$. Arguing as above, if $x_i^0\neq 0$ then $x_j^1=0$ unless $j=i\pm 1$; and if $x_i^1\neq 0$ then $x_j^0=0$ unless $j=i\pm 1$. This shows that ${\bf x}$ must take the form $(e_i,c_ie_{i-1}+c_i'e_{i+1})$ or $(c_ie_{i-1}+c_i'e_{i+1},e_i)$ for each $2\leq i\leq n-2$ (here, by $\alpha e_i + \beta e_j$ we mean the point in $\mathbb{P}^{n-2}$ whose $i$-th coordinate is $\alpha$, whose $j$-th coordinate is $\beta$, and all of whose other coordinates are zero). Furthermore, all such points are indeed in $\P_2(\E_{A_n})$. This gives us $2(n-3)$ projective lines if $n>3$, and if $n=3$, we remain with two points: $(e_1,e_2),(e_2,e_1)$.
\end{proof}

Recall the cyclic relation in $\E_n$ (see \cite[Lemma 7.2]{FK} and \cite{GeneralizedFK}): for $m = 3,\dots,n$ and every $1 \leq a_1 < \cdots < a_m \leq n$:
\begin{align} \label{cyclic relation}
\sum_{i=2}^{m} x_{a_1,a_i}x_{a_1,a_{i+1}}\cdots x_{a_1,a_m}x_{a_1,a_2}\cdots x_{a_1,a_i} = 0.
\end{align}

\begin{prop} \label{prop:star}
    Let $K_{1,n}$ be the star graph with $n+1$ vertices and $n$ edges. Then $\P_d(\E_{K_{1,n}}) = \emptyset$ for $d\geq n+1$.
\end{prop}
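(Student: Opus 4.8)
The plan is to exploit the presentation of $\E_{K_{1,n}}$ coming from \cite{GeneralizedFK}: the star graph $K_{1,n}$ has central vertex, say $0$, and leaves $1,\dots,n$, so $\E_{K_{1,n}}$ is generated by $x_{01},\dots,x_{0n}$ (writing $x_i:=x_{0i}$), each of square zero, and subject to the cyclic relations \eqref{cyclic relation} applied with $a_1=0$: for every $3\leq m\leq n$ and every $1\leq i_1<\cdots<i_m\leq n$,
\[
\sum_{j=2}^{m} x_{i_j}x_{i_{j+1}}\cdots x_{i_m}x_{i_2}\cdots x_{i_j} = 0
\]
(indices on the leaves, with the first index fixed to the center). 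I would first record that, because $K_{1,n}$ contains no triangle, there are no $R^3$- or $R^4$-type relations among the $x_i$, and the only quadratic relations are $x_i^2=0$; the genuine content is in the higher-degree cyclic relations. The strategy is then to show that these relations force any sufficiently long truncated point module to collapse.

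Next I would set up the coordinate description from Section 2: a $d$-truncated point module over $\E_{K_{1,n}}$ is a point
\[
{\bf x} = \big([x_1^0\colon\cdots\colon x_n^0],\dots,[x_1^{d-1}\colon\cdots\colon x_n^{d-1}]\big)\in (\mathbb{P}^{n-1})^{\times d}
\]
satisfying the multilinearizations of $x_i^2=0$ (giving $x_i^{r+1}x_i^r=0$ for all $i$ and all $r$) and of the cyclic relations. The first family already says: for each level $r$, if $x_i^r\neq 0$ then $x_i^{r+1}=0$, i.e.\ the supports of consecutive levels are disjoint. The key combinatorial claim I would aim for is that the support of each level $[x_1^r\colon\cdots\colon x_n^r]$ is a \emph{single} coordinate — i.e.\ each level is some $e_{i_r}$ — once $d$ is large enough, and then that the multilinearized cyclic relation on three consecutive levels $r,r+1,r+2$ with support pattern $(e_{a},e_{b},e_{c})$ forces a contradiction unless the index sequence is eventually constrained in a way that cannot persist for $d\geq n+1$. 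Concretely, multilinearizing $\sum_{j=2}^m x_{i_j}\cdots$ at three consecutive degrees and plugging in single-support levels kills all but at most one monomial per relation, and comparing these across the various choices of $\{i_1,\dots,i_m\}$ should pin down that a nonzero chain of length $d$ requires $d$ distinct leaf-indices to appear, which is impossible once $d>n$. (Compare the analogous "the support must be a single coordinate, and the index sequence is monotone" argument in the proof of Proposition~\ref{prop:An}; here monotonicity is replaced by "all indices distinct".)

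The main obstacle I expect is the first reduction — proving that each level has one-dimensional support. Unlike the $A_n$ case, where the sparse commutation relations $x_ix_j=x_jx_i$ for $|i-j|>1$ immediately force most coordinates to vanish, here \emph{all} pairs $x_i,x_j$ fail to commute and the only low-degree relations are $x_i^2=0$; so the collapse of supports must be extracted purely from the degree-$\geq 3$ cyclic relations, and this requires a careful induction on the degree $r$, tracking how the support at level $r$ constrains the support at level $r+1$ via the multilinearized cyclic relations of every order $m$. Once that structural fact is in hand, the final count — that a valid single-support chain ${\bf x}$ of length $d$ uses $d$ pairwise distinct leaves, forcing $d\leq n$, hence $\P_d(\E_{K_{1,n}})=\emptyset$ for $d\geq n+1$ — should follow by a short direct argument. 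I would also double-check the boundary case $d=n$ (which should be nonempty, witnessing sharpness, consistent with Theorem~\ref{thm:trees} since $K_{1,n}$ is a tree with $n$ edges), but that is not needed for the stated claim.
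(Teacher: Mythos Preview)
Your setup is correct and you have the right ingredients: the multilinearized relations $x_i^{r+1}x_i^r=0$ give disjoint \emph{consecutive} supports, and the cyclic relations \eqref{cyclic relation} must do the rest. But the plan then goes astray in two places.

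First, the single-support reduction is a red herring. You flag it as the main obstacle, and indeed it is not clear how to prove it directly by induction on $r$ --- for small $d$ it is simply false (e.g.\ for $n=5$, $d=2$ the point $([1{:}1{:}0{:}0{:}0],[0{:}0{:}1{:}1{:}0])$ has two-element supports), so any level-by-level induction that does not already use the global length $d$ cannot succeed. Second, even granting single supports $e_{i_0},\dots,e_{i_{d-1}}$, your proposed use of the degree-$3$ cyclic relation ``on three consecutive levels'' does not force all the $i_r$ to be distinct: if $i_r,i_{r+1},i_{r+2}$ are three distinct leaves, every multilinearized $m=3$ relation vanishes term-by-term and says nothing. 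Distinctness of $i_r$ and $i_{r+l}$ for $l\geq 2$ requires the degree-$(l{+}1)$ cyclic relation, and that is exactly the step you have not supplied.

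The paper bypasses the single-support reduction entirely. Write $S_r\subseteq\{1,\dots,n\}$ for the support of the $r$-th level. Suppose some two supports meet; choose $i<i+l$ with $S_i\cap S_{i+l}\neq\emptyset$ and $l$ \emph{minimal} (so $l\geq 2$). Pick $t_i\in S_i,\dots,t_{i+l-1}\in S_{i+l-1}$ and $t_{i+l}=t_i\in S_{i+l}$; by minimality of $l$ the values $t_i,\dots,t_{i+l-1}$ are pairwise distinct and none of $t_{i+1},\dots,t_{i+l-1}$ lies in $S_i$. Now apply the cyclic relation with $a_1=0$ and $\{a_2,\dots,a_{l+1}\}=\{t_i,\dots,t_{i+l-1}\}$, multilinearized across levels $i,\dots,i+l$. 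Each term is a cyclic shift; all shifts except the one reading $x_{t_i}^{i+l}x_{t_{i+l-1}}^{i+l-1}\cdots x_{t_{i+1}}^{i+1}x_{t_i}^{i}$ have their level-$i$ factor indexed by some $t_{i+j}\notin S_i$ and hence vanish, while that one term is nonzero --- a contradiction. Thus $S_0,\dots,S_{d-1}$ are \emph{pairwise} disjoint nonempty subsets of $\{1,\dots,n\}$, so $d\leq n$. The missing idea in your outline is precisely this minimal-gap argument with the cyclic relation of matching degree; once you have it, neither the single-support reduction nor any induction is needed.
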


\begin{proof}
For simplicity, let us label the vertex at the center of the star by $0$ and the rest of the vertices by $1,\dots,n$. Denote the $n$ generators of $\E_{K_{1,n}}$ (corresponding to the $n$ edges of the star) by $x_1,\dots,x_n$, so $x_i=x_{0,i}$ is the edge connecting the vertices $0$ and $i$.
We argue that $\P_d(\E_{K_{1,n}}) = \emptyset$ whenever $d\geq n+1$; it is evidently sufficient to prove this for $d=n+1$. Otherwise, let:  \[ {\bf x} = (x^0,\dots,x^{d-1})\in \P_d(\E_{K_{1,n}}) \subseteq  \left(\mathbb{P}^{n-1}\right)^{\times d} \]
where $x^i = [x_1^i\colon \cdots \colon x_n^i]$. For each $0\leq i\leq d-1$, let $S_i\subseteq \{1,\dots,n\}$ be the support of $x_i$, namely, $S_i = \{1\leq p\leq n\ |\ x_p^i\neq 0\}$. Suppose that $S_i \cap S_{i+l} \neq \emptyset $ for some $0\leq i < i+l \leq d-1$ with the smallest possible $l>0$ (notice that $l\geq 2$, as all of the generators are squared zero). Pick $t_i,t_{i+1},\dots,t_{i+l}=t_i$ from $S_i,\dots,S_{i+l}$, respectively, such that $t_i,\dots,t_{i+l-1}$ are all distinct. By (\ref{cyclic relation}) for $a_1=0,a_2=t_{i},a_3=t_{i+l-1},\dots,a_{l+1}=t_{i+1}$ (notice that $3 \leq l+1\leq d=n+1$, and $\E_{K_{1,n}}\subseteq \E_{n+1}$), we have that:
\begin{eqnarray} \label{cyclic relation star}
x_{t_i}^{i+l}x_{t_{i+l-1}}^{i+l-1}\cdots x^{i+1}_{t_{i+1}}x^{i}_{t_i} & + & x_{t_{i+l-1}}^{i+l} x_{t_{i+l-2}}^{i+l-1}\cdots x^{i+1}_{t_{i}}x^i_{t_{i+l-1}} + \cdots \\ & + & x_{t_{i+1}}^{i+l} x_{t_{i}}^{i+l-1}\cdots x^{i+1}_{t_{i+2}}x^i_{t_{i+1}} = 0. \nonumber
\end{eqnarray}
Notice that in (\ref{cyclic relation star}), all of the monomials except for the first one vanish, since $t_{i+1},\dots,t_{i+l-1}\notin S_i$; but the first monomial is non-zero, since $t_i\in S_i,\dots t_{i+l-1}\in S_{i+l-1},t_i=t_{i+l}\in S_{i+l}$, a contradiction.

It follows that all of the supports $S_0,S_1,\dots,S_{d-1}$ are pairwise disjoint. Since these supports must be non-empty, it follows that for $d\geq n+1$ we have at least $n+1$ pairwise disjoint non-empty subsets of $\{1,\dots,n\}$, a contradiction; hence $\P_{n+1}(\E_{K_{1,n}})=\emptyset$.
\end{proof}

We can now prove Theorem \ref{thm:trees}. Recall that the line graph $L(G)$ of a graph $G$ is a graph whose vertices are the edges in $G$, and in which two vertices are connected by an edge if the corresponding edges in $G$ share a common vertex.

\begin{proof}[{Proof of Theorem \ref{thm:trees}}]
Let $\T$ be a tree with $n$ edges, say, $x_1,\dots,x_n$. By abuse of notation, let us identify $x_1,\dots,x_n$ with the corresponding degree-$1$ generators of $\E_\T$.
Assume to the contrary that $\E_\T$ admits a truncated point module:
\[
M = ke_0 \oplus \cdots \oplus ke_{n+1}
\]
and pick, for each $0\leq i\leq n$, an edge $x_{a_i}$ such that $x_{a_i}\cdot e_i \neq 0$ (namely, $x_{a_i} \cdot e_i$ is a non-zero scalar multiple of $e_{i+1}$). Thus $a_0,\dots,a_n \in \{1,\dots,n\}$. Furthermore, we claim that for each $0\leq i\leq n-1$, the edges   $x_{a_i},x_{a_{i+1}}$ share a common vertex. For otherwise, their corresponding generators would commute with each other and $ke_i\oplus ke_{i+1}\oplus ke_{i+2}$ would be a truncated point module of degree $2$ (re-grading $\deg(e_i)=0,\deg(e_{i+1})=1,\deg(e_{i+2})=2$) over the commutative algebra $k[x,y]/\left<x^2,y^2\right>$ (via $x\mapsto x_{a_i},\ y\mapsto x_{a_{i+1}}$), whose only truncated point modules are of degree $\leq 1$.
 
By the Pigeonhole Principle, there exist $0\leq i < j\leq n$ such that $a_i = a_j$; we may further assume that $|i-j|$ is smallest possible. (Notice that $j\geq i+2$, since otherwise $x_{a_{i+1}}x_{a_i} \cdot e_i=x_{a_i}^2 \cdot e_i=0$.) Hence the sequence of edges: 
\[
C \colon  x_{a_i},x_{a_{i+1}},\dots,x_{a_{j-1}},x_{a_j} = x_{a_i}
\]
is a cycle (hence a bi-connected subgraph) in the line graph $L(\T)$. Fix a maximal bi-connected component containing $C$; since $L(\T)$ is a block graph (see \cite[Theorem 8.5]{Harary} for more information on line graphs), $C'$ is a clique. In particular, each two edges among $x_{a_i},\dots,x_{a_j}$ share a common vertex in $\T$.

For each $i\leq t\leq j-1$, let $v_t$ denote the (unique) common vertex incidenting both $x_{a_t}$ and $x_{a_{t+1}}$. If $v_i\neq v_{i+1}$ then the edges $x_{a_i},x_{a_{i+1}},x_{a_{i+2}}$ form a triangle in $\T$, contradicting the assumption that $\T$ is a tree, so $v_i = v_{i+1}$. 
Either $v_i=v_{i+1}=\cdots=v_{j-1}$, implying that $x_{a_i},\dots,x_{a_j}$ are edges of a star sub-graph $K_{1,j-i}$ of $\T$ (recall that $x_{a_j}=x_{a_i}$), or else, $v_i=v_{i+1}=\cdots=v_{s-1}\neq v_s$ for some $i+2 \leq s\leq j-1$. Let $w_i,\dots,w_s$ be the endpoints of the edges $x_{a_i},\dots,x_{a_s}$ in $\T$ (respectively) which are \emph{not} $v_i$ and observe that they are all distinct.  
Thus $x_{a_i},\dots,x_{a_s}$ all share a common vertex $v_i$, but $x_{a_{s+1}}$ does not. One of the two vertices incidenting with $x_{a_{s+1}}$ is $w_s$, since $x_{a_s}, x_{a_{s+1}}$ share a common vertex which is not $v_i$; but $x_{a_{s+1}}$ must share a vertex with $x_{a_i},x_{a_{i+1}}$ (as they form a clique in $L(\T)$), so the vertices $w_i,w_{i+1}$ must incident with $x_{a_{s+1}}$ as well. Since $s\geq i+2$, we observe that $w_i,w_{i+1},w_s$ are distinct, a contradiction (they must all incident with $x_{a_{s+1}}$). For conclusion, we proved that $x_{a_i},\dots,x_{a_j}$ form a star graph $K_{1,j-i}$. Therefore, if we re-name the homogeneous basis elements $e_i,\dots,e_{j+1}$ of $M$ by $f_0,\dots,f_{(j-i)+1}$, respectively, and re-grade them by $\deg(f_0)=0,\dots,\deg(f_{(j-i)+1})=j-i+1$ then:
\[
M' = kf_0 \oplus \cdots \oplus ke_{(j-i)+1} = ke_i \oplus \cdots \oplus ke_{j+1}
\]
is a degree-$(j-i+1)$ truncated point module over $\E_{K_{1,j-i}}$, a contradiction to Proposition \ref{prop:star}. This, together with the observation from Proposition \ref{prop:An} that $p(\E_{A_n})=n-1$ (recall that $A_n$ has $n-1$ edges), completes the proof.
\end{proof}

\end{document}